\newtheorem{theorem}{Theorem}
\newtheorem{lemma}[theorem]{Lemma}
\newtheorem{proposition}[theorem]{Proposition}
\theoremstyle{definition}
\newtheorem{assumption}{Assumption}
\theoremstyle{remark}
\newtheorem{remark}[theorem]{Remark}
\newtheorem*{remark*}{Remark}
\newcommand\myurl[1]{\url{#1}}
\def \CSmooth(#1,#2){\mathcal{C}_{#1,#2}}
\def \Mgale(#1,#2){M_{#1}^{#2}}
\def \Ngale(#1,#2){\mathcal{N}_{#1}^{#2}}
\newcommand{\avg}[2]{{\rm Av}_{#2}\left[{#1}\right]}
\title[Octopus inequality \& moving particle lemma]{The moving particle lemma for the exclusion process on a weighted graph}
\author{Joe P. Chen}
\address{Department of Mathematics, University of Connecticut, Storrs, CT 06269, USA.}
\curraddr{\textsc{Department of Mathematics, Colgate University, Hamilton, NY 13346, USA, \and
Institut Henri Poincar\'e, UMS 839 (CNRS/UPMC), 11 rue Pierre et Marie Curie, 75231 Paris Cedex 05, France.}}
\email{jpchen@colgate.edu}
\urladdr{\url{http://math.colgate.edu/~jpchen}}
\thanks{Research partially supported by NSF grants DMS-1106982 and DMS-1262929.}
\date{\today}
\keywords{Exclusion process; moving particle lemma; interchange process; electric network; effective resistance.}
\subjclass[2010]{05C81, 28A80, 31C20, 60K35, 82C22.} 
\begin{document}
\renewcommand{\theequation}{\thesection.\arabic{equation}}
\numberwithin{equation}{section}

\begin{abstract}
We prove a version of the moving particle lemma for the exclusion process on any finite weighted graph, based on the octopus inequality of Caputo, Liggett, and Richthammer. In light of their proof of Aldous' spectral gap conjecture, we conjecture that our moving particle lemma is optimal in general. Our result can be applied to graphs which lack translational invariance, including, but not limited to, fractal graphs. An application of our result is the proof of local ergodicity for the exclusion process on a class of weighted graphs, the details of which are reported in a follow-up paper [\href{http://arxiv.org/abs/1705.10290}{arXiv:1705.10290}].
\end{abstract}

\maketitle

\section{Introduction and main result}

The exclusion process is one of the most well-studied interacting particle systems in probability theory; see \cite{AldousFill, Spitzer, IPSStFlour} for introductory accounts of the model, \cite{KipnisLandim, LiggettBook, Spohn} for technical backgrounds, and \cite{ABDS13, BDGJL15} and references therein for connections with non-equilibrium statistical mechanics. In this short paper we consider the exclusion process on a finite weighted graph. To fix notation, let $G=(V,E)$ be a finite connected undirected graph, and ${\bf c}=(c_{xy})_{xy\in E}$ be a collection of nonnegative real numbers called \emph{conductances}. A weighted graph is a pair $(G,{\bf c})$. The symmetric exclusion process (SEP) on $(G,{\bf c})$ is a continuous-time Markov chain on the state space $\{0,1\}^V$ with infinitesimal generator
\begin{align}
\label{LSEP}
\left(\mathcal{L}^{\rm EX}_{(G,{\bf c})} f\right)(\zeta) = \sum_{xy\in E} c_{xy} (\nabla_{xy} f)(\zeta), \quad f: \{0,1\}^V\to\mathbb{R},
\end{align}
where $(\nabla_{xy} f)(\zeta)  = f(\zeta^{xy})-f(\zeta)$ and
\begin{align*}
(\zeta^{xy})(z) = \left\{\begin{array}{ll} \zeta(y), &\text{if}~z=x,\\ \zeta(x), &\text{if}~z=y,\\ \zeta(z), &\text{otherwise}.\end{array}\right. 
\end{align*}
Informally speaking, one starts with a configuration $\zeta$ in which $k$ of the $|V|$ vertices are occupied with a particle, and the remaining vertices are empty. All particles are deemed indistinguishable. A transition from $\zeta$ to $\zeta^{xy}$ occurs with rate $c_{xy}$ if and only if one of the vertices $\{x,y\}$ is occupied and the other is empty.

There are two key properties of the SEP. First, the total number of particles is conserved in the process. Second, the process is reversible with respect to any constant-density product Bernoulli measure $\nu_\alpha$ on $\{0,1\}^V$, $\alpha\in [0,1]$, which has marginal $\nu_\alpha\{\zeta: \zeta(x)=1\}=\alpha
$ for all $x\in V$.
We introduce the Dirichlet energy, \emph{cf.\@} \cite{KipnisLandim}*{p.~343}:
\begin{align}
\mathcal{E}^{\rm EX}_{(G,{\bf c}), \nu_\alpha}(f) = \nu_\alpha\left[f(-\mathcal{L}^{\rm EX}_{(G,{\bf c})} f) \right] = \frac{1}{2}\sum_{xy\in E} c_{xy} \,\nu_\alpha\left[(\nabla_{xy} f)^2\right], \quad f: \{0,1\}^V \to \mathbb{R},
\end{align}
where, throughout the paper, we adopt the shorthand $\mu[h] := \int\, h\,d\mu$ for a Borel measure $\mu$ and a function $h\in L^1(\mu)$. 

Our main result is the following inequality called the \emph{moving particle lemma}.
\begin{theorem}
\label{thm:dirichletex}
For every $\alpha\in [0,1]$, $x,y \in V$, and $f: \{0,1\}^V \to\mathbb{R}$,
\begin{equation}
\label{ineq:dirichletex}
\frac{1}{2} \nu_{\alpha}[(\nabla_{xy} f)^2] \leq R_{\rm eff}(x,y) \mathcal{E}^{\rm EX}_{(G,{\bf c}), \nu_\alpha}(f),
\end{equation}
where $R_{\rm eff}(\cdot,\cdot): V\times V\to \mathbb{R}_+$ is the effective resistance on $(G,{\bf c})$ defined by
\begin{align}
\label{Reff}
[R_{\rm eff}(x,y)]^{-1} &= \inf\left\{ \sum_{zw\in E} c_{zw} [h(z)-h(w)]^2 ~\Bigg|~ h:V\to\mathbb{R},~ h(x)=1,~h(y)=0\right\}\\
\label{Reff2} &\overset{\text{or}}{=} \inf\left\{\frac{\sum_{zw\in E} c_{zw} [h(z)-h(w)]^2}{[h(x)-h(y)]^2} ~\Bigg|~ h:V\to\mathbb{R} \right\},
\end{align}
\emph{cf.\@} \cite{KumagaiStFlour}*{(1.14)} or \cite{StrichartzBook}*{(1.6.1) \& (1.6.2)}.
\end{theorem}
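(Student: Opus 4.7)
My plan is to derive \eqref{ineq:dirichletex} by iteratively applying the Caputo--Liggett--Richthammer octopus inequality, which I view as the particle-system analogue of the \emph{star--mesh} transformation of electric network theory. Each application eliminates one vertex of $V\setminus\{x,y\}$ and can only decrease the exclusion-process Dirichlet form, while the classical invariance of effective resistance under the star--mesh transformation preserves $R_{\rm eff}(u,w)$ for every surviving pair of vertices. After finitely many steps the network collapses to two vertices $x,y$ joined by parallel edges of total conductance $R_{\rm eff}(x,y)^{-1}$, where \eqref{ineq:dirichletex} reduces to an identity.

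\textbf{The local step.} Fix a vertex $o\in V\setminus\{x,y\}$ with neighbors $v_1,\ldots,v_d$ and set $c_i:=c_{o v_i}$. The octopus inequality, applied to the star rooted at $o$ and then projected from the interchange process to the exclusion process under $\nu_\alpha$, furnishes the quadratic-form bound
\[
\sum_{1\leq i<j\leq d}\frac{c_i c_j}{\sum_{k} c_k}\, \nu_\alpha\!\left[(\nabla_{v_i v_j} f)^2\right] \;\leq\; \sum_{i=1}^d c_i\, \nu_\alpha\!\left[(\nabla_{o v_i} f)^2\right],
\]
valid for every $f:\{0,1\}^V\to\mathbb{R}$ and every $\alpha\in[0,1]$. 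Delete the edges incident to $o$ and add, in parallel with any pre-existing edges, new edges $\{v_i,v_j\}$ of conductance $c_i c_j/\sum_k c_k$; this produces a reduced network $(G',{\bf c}')$ whose exclusion Dirichlet form is pointwise dominated by that of $(G,{\bf c})$, since parallel edges combine additively in $\mathcal{E}^{\rm EX}$.

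\textbf{Iteration, conclusion, and the main obstacle.} Enumerate $V\setminus\{x,y\}=\{o_1,\ldots,o_m\}$ and apply the local step successively at each $o_k$ in the graph produced by the previous reduction. Chaining the Dirichlet-form bounds and invoking the invariance of $R_{\rm eff}(x,y)$ under star--mesh transformations, one arrives at the two-vertex network with a single edge of conductance $R_{\rm eff}(x,y)^{-1}$, whose Dirichlet form equals $\tfrac{1}{2}R_{\rm eff}(x,y)^{-1}\nu_\alpha[(\nabla_{xy} f)^2]$; this yields \eqref{ineq:dirichletex} after rearrangement. The substantive obstacle is the clean identification of the octopus inequality with a single step of star--mesh reduction, and the verification that the projection from the interchange process to the exclusion process under Bernoulli $\nu_\alpha$ preserves the correct form of the inequality for every $\alpha$; everything else is bookkeeping, including the fact that the star--mesh reductions commute as Schur complements, so the order of elimination is immaterial.
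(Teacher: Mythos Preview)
Your proposal is correct and follows essentially the same route as the paper: iterate the octopus inequality as a star--mesh reduction to collapse the network to the two vertices $x,y$, and identify the final conductance with $R_{\rm eff}(x,y)^{-1}$. The paper handles what you call the ``substantive obstacle'' (passing from the interchange process under the uniform measure to the exclusion process under $\nu_\alpha$) by decomposing $\nu_\alpha$ as a binomial mixture over the $k$-particle sectors, on each of which the exclusion Dirichlet form equals the interchange Dirichlet form of $f_k\circ\pi_k$; this is precisely the missing justification you flag, and once it is in place the two arguments are the same.
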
 

\begin{remark}
In \eqref{Reff} and \eqref{Reff2} above we defined the effective resistance with respect to a (unit) voltage drop. It is also possible to define the effective resistance with respect to a unit current flow, \emph{cf.\@} \cite{MCMT}*{p.~121}. From the physical point of view, the effective resistance is the power (energy per unit time) dissipated in a unit flow $I$. Writing $I_e$, $\Delta_e V$, and $R_e$ for, respectively, the current, the voltage drop, and the resistance across the edge $e$, we obtain
\[
\sum_e I_e \Delta_e V = \sum_e \frac{[\Delta_e V]^2}{R_e} = \sum_e c_e [\Delta_e V]^2,
\]
where Ohm's law $\Delta_e V = R_e I_e$ was used.
\end{remark}

Theorem \ref{thm:dirichletex} says that the cost of transporting a particle from $x$ to $y$ in the exclusion process is bounded above by the effective resistance distance w.r.t.\@ the \emph{random walk} process times the total energy in the exclusion process. On the one hand, it is reminiscent of the inequality appearing in the classical \emph{Dirichlet's principle}, due to Thomson \cite{ThomsonTait}*{\S376, p.~443} (according to the note in \cite{DoyleSnell}*{Exercise 1.3.11}):
\begin{align}
\label{ineq:dirichletel}
[h(x)-h(y)]^2 \leq R_{\rm eff}(x,y) \mathcal{E}^{\rm el}_{(G,{\bf c})}(h), \quad x,y\in V, \quad h:V\to\mathbb{R},
\end{align}
where
\begin{align}
\label{eq:elenergy}
\mathcal{E}^{\rm el}_{(G,{\bf c})}(h) = \sum_{zw\in E} c_{zw}[h(z)-h(w)]^2
\end{align}
is the Dirichlet energy associated with the symmetric random walk process on $(G, {\bf c})$.   (Observe that \eqref{ineq:dirichletel} follows directly from \eqref{Reff2}. Also note the absence of the prefactor $\frac{1}{2}$ in \eqref{ineq:dirichletel}; the sum in \eqref{eq:elenergy} runs over edges, not vertices.) Notably, the inequality \eqref{ineq:dirichletel} saturates to an equality if and only if $h$ is a harmonic function on $V\setminus \{x,y\}$, \emph{i.e.,} the infimum in the RHS of \eqref{Reff} is a minimum. (Equivalently, and in physics-friendly terms, $R_{\rm eff}(x,y)$ is obtained by minimizing the power dissipation over all unit current flows in $(G, {\bf c})$ from $x$ to $y$, subject to Kirchhoff's current and voltage laws.)

On the other hand, the author does not know the conditions under which \eqref{ineq:dirichletex} saturates to an equality on a general weighted graph. To wit, consider the optimization problem
\begin{align}
\label{eq:opt}
\inf\left\{ \mathcal{J}^{x,y}(f)~|~ f: V\to\mathbb{R} \right\} \quad \text{where} \quad \mathcal{J}^{x,y}(f) = \frac{2\mathcal{E}^{\rm EX}_{(G, {\bf c}),\nu_\alpha}(f)}{\nu_\alpha[(\nabla_{xy} f)^2]}.
\end{align}
Since the functional $\mathcal{J}^{x,y}$ is nonnegative and lower semicontinuous, there exists a minimizer for \eqref{eq:opt}. Then Theorem \ref{thm:dirichletex} says that
\begin{align}
\label{Reffinf}
R_{\rm eff}(x,y) \geq \left(\inf \{\mathcal{J}^{x,y}(f) ~|~ f: V\to\mathbb{R}\}\right)^{-1}.
\end{align}
However, it is unclear in general when \eqref{Reffinf} becomes an equality. 

That being said, we conjecture that Theorem \ref{thm:dirichletex} is optimal on any finite weighted graph, and more generally, on resistance spaces \cite{KigamiRF}, in light of the connection between the moving particle lemma and the spectral gap of an interacting particle system \cite{Morris, Jara, Quastel}. As will be described later in the paper, our proof of Theorem \ref{thm:dirichletex} is based on the ``octopus inequality'' of Caputo, Liggett, and Richthammer \cite{CLR09}, which is a nontrivial energy inequality associated to the interchange process on a weighted graph. (For the definition of the interchange process, see \S\ref{sec:interchange}.)  It is not known in general when the octopus inequality saturates to an equality. Nevertheless, using the octopus inequality, the authors of \cite{CLR09} were able to prove the equality between the spectral gap of the interchange process and the spectral gap of the random walk process, thereby positively resolving Aldous' conjecture (circa 1992 \cite{AldousSGAP}, \emph{cf.\@} \cite{AldousFill}*{Chapter 14, Open Problem 29}). This implies, via a projection argument, that the spectral gap of the exclusion process equals the spectral gap of the random walk process. 

For more information about the effective resistance, see \cite{DoyleSnell} for an elementary exposition, as well as the relevant chapters in \cite{LyonsPeres, MCMT}. Note that $R_{\rm eff}(\cdot, \cdot)$ defines a metric on $V$ (\emph{cf.\@} \cite{MCMT}*{Exercise 9.8} and \cite{KigamiBook}*{Theorem 2.1.14}). 

\subsection{Raison d'\^{e}tre for Theorem \ref{thm:dirichletex}}

To illustrate the difference between our Theorem \ref{thm:dirichletex} and previous results, we quickly recap the argument which leads to the conventional moving particle lemma (\emph{cf.\@} \cite{GPV88}*{Lemma 4.4}, \cite{KOV89}*{pp.\@ 123--124}, \cite{KipnisLandim}*{p.\@ 95}); see \eqref{ineq:convMPL} below. For simplicity assume $(G,{\bf c})$ has all conductances equal to 1. Start by identifying a shortest path connecting $x$ and $y$
\begin{align*}
\{x_0=x, \,x_1,\,\cdots, \,x_{L-1},\, x_L=y\, |\, x_i x_{i+1} \in E\},
\end{align*}
and then swap particle configurations along the edges of the path in this order,
\begin{align*}
 x_0x_1, \,x_1 x_2,\, \cdots, \,x_{L-1} x_L, x_{L-1}x_{L-2},\, x_{L-2} x_{L-3}, \,\cdots, \,x_1 x_0.
\end{align*}
This sends $\zeta(y)$ to $x$, $\zeta(x)$ to $y$, and leaves $\zeta(z)$ intact for all $z \notin \{x,y\}$. (In a related context, \cite{DSC93} uses this path argument and a comparison argument to obtain eigenvalue estimates in the $k$-particle exclusion process on $(G,{\bf c})$.) Let us denote each edge-wise swap by the operator $D_m: \{0,1\}^V\to\{0,1\}^V$, $m\in \{1,2,\cdots, 2L-1\}$, in the order shown, set $T_1={\rm Id}$ and, for $m\geq 2$, $T_m = D_{m-1} D_{m-2}\cdots D_1$. (In particular, $D_L$ represents the swap between particles at $x_L$ and $x_{L-1}$, while $D_{L+1}$ the swap between particles at $x_{L-1}$ and $x_{L-2}$.) Now use the telescoping identity
\begin{align*}
(\nabla_{xy} f)(\zeta) = \sum_{m=1}^L (\nabla_{x_{m-1} x_m} f)(T_m \zeta) + \sum_{k=1}^{L-1} (\nabla_{x_{L-k} x_{L-k-1}}f)(T_{L+k}\zeta),
\end{align*}
then apply the Cauchy-Schwarz inequality:
\begin{align}
\label{ineq:path}
[(\nabla_{xy} f)(\zeta)]^2 \leq (2L-1)\left[\sum_{m=1}^L [(\nabla_{x_{m-1} x_m} f)(T_m \zeta)]^2 + \sum_{k=1}^{L-1} [(\nabla_{x_{L-k} x_{L-k-1}}f)(T_{L+k}\zeta)]^2\right].
\end{align}
Integrating both sides of \eqref{ineq:path} w.r.t.\@ the uniform probability measure $\nu$ on $\{0,1\}^V$, and noting the transposition invariance of the measure $\nu$, we obtain 
\begin{align}
\label{preconvMPL}
\nu[(\nabla_{xy} f)^2] \leq (2L-1) \left(\sum_{m=1}^L \nu[(\nabla_{x_{m-1} x_m} f)^2] + \sum_{k=1}^{L-1} \nu[(\nabla_{x_{k-1} x_k} f)^2]\right) \leq 2(2L-1) \sum_{m=1}^L \nu[(\nabla_{x_{m-1} x_m} f)^2] .
\end{align}

Besides \eqref{preconvMPL}, one also needs to verify that the energy distribution over each edge is uniformly bounded from above.
\begin{assumption}
\label{ass:A}
There exists a positive constant $C$, possibly depending on $f$, such that 
\begin{align*}
\frac{\frac{1}{2}\nu[(\nabla_e f)^2] \cdot |E|}{\mathcal{E}^{\rm EX}_{(G,{\bf c}),\nu}(f)} \leq C
\end{align*}
for all $e\in E$.
\end{assumption}

Combining Assumption \ref{ass:A} with \eqref{preconvMPL} we get
\begin{align}
\label{ineq:convMPL}
\frac{1}{2}\nu[(\nabla_{xy} f)^2] \leq 2 (2L-1) L \cdot  \frac{C}{|E|} \mathcal{E}^{\rm EX}_{(G,{\bf c}),\nu}(f) \leq C \frac{4L^2}{|E|} \mathcal{E}^{\rm EX}_{(G,{\bf c}),\nu}(f).
\end{align}

Inequality \eqref{ineq:convMPL} is the conventional moving particle lemma. It is effective when the graph $(G,{\bf c})$ is quasi-transitive w.r.t.\@ a free group action (such as the group of translations), and $f$ is taken to be invariant under the group action. The canonical example is the $d$-dimensional discrete torus $\mathbb{T}^d_N:=(\mathbb{Z}/N\mathbb{Z})^d$: if $f: \mathbb{T}^d_N\to\mathbb{R}$ is invariant under lattice translations and rotations by $\pi/2$, then Assumption \ref{ass:A} holds with equality and $C=1$, and \eqref{ineq:convMPL} becomes
\begin{align}
\frac{1}{2}\nu[(\nabla_{xy} f)^2] \leq \frac{4\|x-y\|_1^2}{N^d} \mathcal{E}^{\rm EX}_{\mathbb{T}^d_N,\nu}(f), \quad x,y\in \mathbb{T}^d_N,
\end{align}
where $\|x\|_1 := \sum_{i=1}^d |x_i|$ is the $L^1$-distance on $\mathbb{T}^d_N$ \cite{GPV88, KOV89, KipnisLandim}. A closely related example is a crystal lattice, which is analyzed in \cite{TanakaCrystal}. (See \cite{TanakaCrystal}*{Lemma 4.2} for the moving particle lemma there. It has since been extended to finitely generated residually finite amenable groups in \cite{TanakaTower}). We also mention examples of random graphs for which a variation of \eqref{ineq:convMPL} holds almost surely w.r.t.\@ the law of the random environment; see \cite{Quastel}*{Lemma 5.2} for the case of the exclusion process with site disorder, and \cite{Faggionato}*{Lemma 5.2} for the case of the zero-range process on the supercritical percolation cluster on $\mathbb{Z}^d$. 

However, it may be the case that for certain classes of weighted graphs, one cannot verify Assumption \ref{ass:A} for \emph{any} $f$, which would cast \eqref{ineq:convMPL} in doubt. Consider graphs associated with self-similar fractal sets, such as the Sierpinski gasket ($SG$, see Figure \ref{fig:SGImage}) and the Sierpinski carpet. On these spaces, it is proved that for every $f$ in the domain of the Dirichlet energy for single-particle diffusion, the energy measure is mutually \emph{singular} w.r.t.\@ the Hausdorff measure \cite{Kusuoka, BST99, Hino}. It would then seem plausible that the exclusion process energy measure is also mutually singular w.r.t.\@ the Hausdorff measure, though we leave this as an open problem. At any rate, since we are unable to verify Assumption \ref{ass:A} on these spaces, we use Theorem \ref{thm:dirichletex} instead to capture the averaging property through the effective resistance distance.

\begin{figure}
\centering
\includegraphics[width=0.8\textwidth]{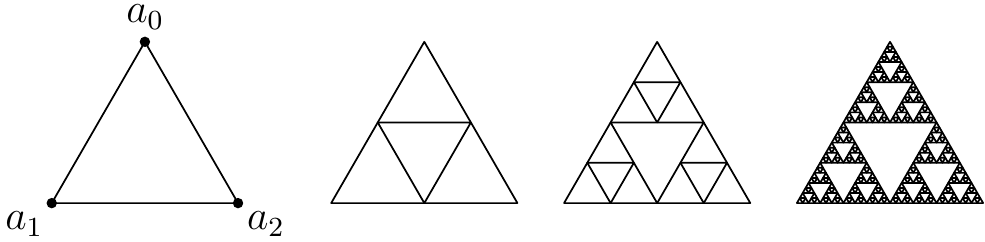}
\caption{The Sierpinski gasket ($SG$) graphs of level $0$, $1$, $2$, and $5$, respectively.}
\label{fig:SGImage}
\end{figure}

\subsection{Application to local ergodicity} \label{sec:fractals}

A preliminary motivation of this paper is to establish a local ergodic theorem for the exclusion process on non-translationally-invariant weighted graphs, such as $SG$ (see Theorem \ref{thm:localrep} below).
Theorem \ref{thm:dirichletex} will enable us to prove the so-called two-blocks estimate and, in turn, local ergodicity, for the said process. The details are reported in an upcoming paper \cite{ChenLocalErgodic}. Then in \cite{ChenTeplyaevSGHydro} we specialize to $SG$, and prove the hydrodynamic limit of the (boundary-driven) exclusion process, \emph{viz.\@} the joint current-density law of large numbers and large deviations principle.  A long-term goal of ours is to establish the hydrodynamic limit of interacting particle systems on the so-called \emph{strongly recurrent} graphs, including post-critically finite self-similar (p.c.f.s.s.\@) fractals \cite{BarlowStFlour, KigamiBook} and Sierpinski carpets \cite{BB99, KusuokaZhou, BBKT10}, whereupon random walks satisfy \emph{sub-Gaussian} heat kernel estimates \cite{BCK05}.

To give a flavor of how Theorem \ref{thm:dirichletex} is applied, we now state the moving particle lemma on $SG$ (which has not appeared in the previous literature according to the author's knowledge), as well as the local ergodic theorem. For discussions of other weighted graphs see \cite{ChenLocalErgodic}. 

Let $a_0$, $a_1$, $a_2$ be the vertices of a nondegenerate triangle in $\mathbb{R}^2$, and $G_0$ be the complete graph on the vertex set $V_0 = \{a_0, a_1, a_2\}$, as shown on the left in Figure \ref{fig:SGImage}. We declare $V_0$ to be the (analytical but not topological) boundary of $SG$. Define the contracting similitude $\Psi_i : \mathbb{R}^2 \to\mathbb{R}^2$, $\Psi_i(x) = \frac{1}{2}(x-a_i) + a_i $ for each $i\in \{0,1,2\}$. For each $N\in \mathbb{N}$, the $N$th-level $SG$ graph $G_N=(V_N, E_N)$ is constructed inductively via the formula $G_N = \bigcup_{i=0}^2 \Psi_i(G_{N-1})$. Set $V_* = \bigcup_{N=0}^\infty V_N$. Finally, set the conductance on every $e\in E_N$ to $1$. We denote the corresponding weighted graph $(G_N, {\bf 1})$.

For each $m$-letter word $w=w_1 w_2 \cdots w_m \in \{0,1,2\}^m$ , put $\Psi_w = \Psi_{w_1} \circ \Psi_{w_2} \circ \cdots \circ \Psi_{w_m}$. Two vertices $x, y\in V_N$ are said to be in the same level-$j$ cell, $j\in \{0,1,\cdots, N\}$, if $x, y\in \Psi_w(V_0)$ for some $j$-letter word $w \in \{0,1,2\}^j$.

\begin{proposition}[Moving particle lemma on $SG$]
\label{prop:SGMPL}
There exists a positive constant $C$, independent of $N$, such that for every $\alpha \in [0,1]$, $x,y \in V_N$ which are in the same level-$j$ cell, and $f: V_N\to\mathbb{R}$, we have
\begin{align}
\label{SGMPL}
\nu_\alpha\left[(\nabla_{xy} f)^2\right] \leq C \left(\frac{5}{3}\right)^{N-j} \mathcal{E}^{\rm EX}_{(G_N,{\bf 1}), \nu_\alpha}(f).
\end{align}
\end{proposition}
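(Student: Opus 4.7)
The plan is to apply Theorem \ref{thm:dirichletex} directly and reduce \eqref{SGMPL} to a uniform upper bound on the effective resistance between $x$ and $y$ on $(G_N,{\bf 1})$. By \eqref{ineq:dirichletex},
\[
\nu_\alpha[(\nabla_{xy} f)^2] \;\leq\; 2\, R_{\rm eff}^{(G_N,{\bf 1})}(x,y)\, \mathcal{E}^{\rm EX}_{(G_N,{\bf 1}),\nu_\alpha}(f),
\]
so it suffices to produce a constant $C'$, independent of $N$ and $j$, such that $R_{\rm eff}^{(G_N,{\bf 1})}(x,y) \leq C'\,(5/3)^{N-j}$ whenever $x,y$ lie in a common level-$j$ cell $\Psi_w(V_{N-j})$.

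To localize the resistance computation to the cell housing $x$ and $y$, I would invoke Rayleigh's monotonicity principle (\emph{cf.\@} \cite{DoyleSnell}): discarding every edge of $G_N$ not contained in the induced subgraph on $\Psi_w(V_{N-j})$ can only increase $R_{\rm eff}(x,y)$. Since this induced subgraph is isomorphic as a weighted graph (via $\Psi_w^{-1}$) to $(G_{N-j},{\bf 1})$, we obtain
\[
R_{\rm eff}^{(G_N,{\bf 1})}(x,y) \;\leq\; R_{\rm eff}^{(G_{N-j},{\bf 1})}\bigl(\Psi_w^{-1}(x), \Psi_w^{-1}(y)\bigr).
\]

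The remaining task is a uniform-in-$M$ resistance-diameter estimate, $\max_{u,v\in V_M} R_{\rm eff}^{(G_M,{\bf 1})}(u,v) \leq C''(5/3)^M$. This is the classical self-similar scaling of $R_{\rm eff}$ on $SG$, whose key input is the star-triangle identity $R_{\rm eff}^{(G_M,{\bf 1})}(a_i,a_j) = \frac{2}{3}(5/3)^M$ for distinct $a_i,a_j \in V_0$; equivalently, the Dirichlet-form renormalization factor on $SG$ is $5/3$ (see e.g.\@ \cite{StrichartzBook, KigamiBook, BarlowStFlour}). A short induction on $M$ then handles arbitrary pairs $u,v \in V_M$, by routing a unit current through the shared boundary vertices of the three level-$1$ subcells. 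Setting $M = N-j$ and combining with the previous steps yields \eqref{SGMPL} with $C = 2C'$.

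I expect the self-similar resistance scaling to be the main technical point. The boundary identity $R_{\rm eff}^{(G_M,{\bf 1})}(a_i,a_j) = \frac{2}{3}(5/3)^M$ is standard fractal folklore, but extending it to arbitrary pairs with a \emph{uniform} constant requires a careful, though routine, bookkeeping argument. The remainder of the proof is a clean one-shot application of Theorem \ref{thm:dirichletex} combined with Rayleigh monotonicity, and it is notable that no path-counting argument on particle swaps (as in \eqref{ineq:convMPL}) is needed: the fractal geometry of $SG$ enters entirely through its resistance metric.
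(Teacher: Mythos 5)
Your proposal is correct and follows essentially the same route as the paper: apply Theorem \ref{thm:dirichletex} and reduce everything to the bound $R_{\rm eff}(x,y)\leq C(5/3)^{N-j}$ for $x,y$ in a common level-$j$ cell. The only difference is that the paper simply cites this resistance estimate (from \cite{StrichartzBook}*{Lemma 1.6.1}), whereas you sketch its proof via Rayleigh monotonicity and the self-similar scaling of resistance on $SG$; that sketch is standard and sound.
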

 
\begin{proof}
It is known (see \emph{e.g.\@} \cite{StrichartzBook}*{Lemma 1.6.1}) that in the graph $(G_N,{\bf 1})$, for every $x$ and $y$ in the same level-$j$ cell, $R_{\rm eff}(x,y) \leq C \left(\frac{5}{3}\right)^{N-j}$ for a constant $C$ independent of $N$ and $j$. Inequality \eqref{SGMPL} follows directly from Theorem \ref{thm:dirichletex} and this resistance estimate. 
\end{proof}

Note that the value $\frac{5}{3}$ is the single-particle diffusive scaling on $SG$, which is crucial to the proof of the two-blocks estimate. Indeed, it suffices to consider all $j=\lfloor \epsilon N\rfloor$ with $\epsilon \in [0,1]$ ($\epsilon$ is the macroscopic aspect ratio used in the coarse-graining argument), and all functions $f$ with $\mathcal{E}^{\rm EX}_{(G_N, {\bf 1}),\nu_\alpha}(f) \leq C \left(\frac{3}{5}\right)^N$. Then the cost of particle transport, \emph{viz.\@} the LHS of \eqref{SGMPL}, is at most of order $\left(\frac{3}{5}\right)^{\lfloor \epsilon N\rfloor}$, which vanishes in the double limit $N\to\infty$ followed by $\epsilon\downarrow 0$. This asymptotic statement is crucial for the two-blocks estimate to go through.

Once the one-block and two-blocks estimates are proved, we can then prove the local ergodic theorem for the exclusion process on $SG$, see Theorem \ref{thm:localrep} below. (In \cite{ChenLocalErgodic} we shall state and prove a more abstract version of the local ergodic theorem which applies to all \emph{strongly recurrent} weighted graphs, in the sense of \cite{BCK05}, which includes $SG$.) Given a denumerable set $\Lambda$, let $|\Lambda|$ denote the cardinality of $\Lambda$. The average of $g: \Lambda\to\mathbb{R}$ over $\Lambda$ is written $\avg{g}{\Lambda} :=|\Lambda|^{-1} \sum_{z\in \Lambda} g(z)$. Let $B_d(x,r) = \{y\in V_*: d(x,y)<r\}$ denote the ball of radius $r$ in the graph metric $d$ centered at $x$. A map $\phi: V_* \times \{0,1\}^{V_*} \to\mathbb{R}$ is called a \emph{local function bundle for vertices} (this terminology comes from \cite{TanakaTower}) if there exists $r_\phi \in (0,\infty)$ such that for any $x\in V_*$, $\phi_x:=\phi(x,\cdot)$ depends only on $\{\eta(z): z\in B_d(x,r_\phi)\}$.

\begin{theorem}[Local ergodic theorem for the exclusion process on $SG$]
\label{thm:localrep}
Let $\mathbb{P}^N_{\alpha}$ be the law of the symmetric exclusion process $(\eta_t^N)_{t\geq 0}$ with generator $5^N \mathcal{L}^{\rm EX}_{(G_N, {\bf 1})}$, started from the product Bernoulli measure $\nu_\alpha$ on $\{0,1\}^{V_N}$ with marginals $\nu_\alpha\{\eta: \eta(x)=1\} =\alpha$ for all $x\in V_N$. Then for each $T>0$ and each $\delta>0$,
\begin{equation}
\label{supexp}
\limsup_{\epsilon\downarrow 0} \limsup_{N\to\infty} \sup_{x\in V_N} \frac{1}{3^N} \log \mathbb{P}^N_{\alpha} \left\{ \left|\int_0^T\, U_{N,\epsilon}(x,\eta^N_t)\,dt\right|>\delta\right\} = -\infty,
\end{equation}
where
\begin{align*}
U_{N,\epsilon}(x,\eta) &=  \phi_x(\eta) -  \Phi_x\left(\avg{\eta}{B_d(x,2^{\lfloor \epsilon N\rfloor})}\right), \quad \Phi_x(\alpha) := \nu_\alpha[\phi_x],
\end{align*}
and $\phi$ is any local function bundle for vertices.

\end{theorem}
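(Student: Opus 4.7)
The plan is to follow the Guo-Papanicolaou-Varadhan entropy method \cite{KipnisLandim}*{Chapter 5}, with Proposition \ref{prop:SGMPL} taking the place of the conventional moving particle lemma at the two-blocks step. Since $\mathbb{P}^N_\alpha$ is initialized at the reversible measure $\nu_\alpha$, the exponential Chebyshev inequality combined with the Feynman-Kac formula and the Rayleigh-Ritz variational principle reduces \eqref{supexp} to showing that, for every fixed $\lambda > 0$,
\begin{equation*}
\limsup_{\epsilon\downarrow 0}\limsup_{N\to\infty}\sup_{x\in V_N}\sup_{f}\Bigl\{\lambda\,\nu_\alpha[U_{N,\epsilon}(x,\cdot)f] - \Bigl(\tfrac{5}{3}\Bigr)^{N}\mathcal{E}^{\rm EX}_{(G_N,{\bf 1}),\nu_\alpha}(\sqrt f)\Bigr\} \leq 0,
\end{equation*}
where the inner supremum runs over probability densities $f$ with respect to $\nu_\alpha$. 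The prefactor $(5/3)^N$ encodes the fractal diffusive scaling: the generator is accelerated by $5^N$ while the probability is exponentiated at volume rate $3^N$.

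Next, I would introduce a mesoscopic scale $\ell$ with $r_\phi\ll\ell\ll\lfloor\epsilon N\rfloor$ and split
\begin{equation*}
U_{N,\epsilon}(x,\eta) = \bigl[\phi_x(\eta)-\Phi_x(\avg{\eta}{B_d(x,2^\ell)})\bigr] + \bigl[\Phi_x(\avg{\eta}{B_d(x,2^\ell)})-\Phi_x(\avg{\eta}{B_d(x,2^{\lfloor\epsilon N\rfloor})})\bigr].
\end{equation*}
The one-block piece is handled by freezing the particle number on $B_d(x,2^\ell)$ and invoking the equivalence of ensembles: as $|B_d(x,2^\ell)|\sim 3^\ell$, the canonical measure on this mesoscopic ball converges to the product measure $\nu_\rho$ on bounded-radius neighborhoods as $\ell\to\infty$, so the canonical expectation of $\phi_x$ converges to $\Phi_x(\rho)$, with the remainder absorbed using the spectral gap of the SEP restricted to the mesoscopic cell (which, via the resolution of Aldous's conjecture in \cite{CLR09}, matches that of the associated random walk on the cell).

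The heart of the argument is the two-blocks term, and it is here that Proposition \ref{prop:SGMPL} is decisive. By Lipschitz continuity of $\alpha\mapsto\Phi_x(\alpha)$ and the bound
\begin{equation*}
\bigl|\avg{\eta}{B_d(x,2^\ell)}-\avg{\eta}{B_d(x,2^{\lfloor\epsilon N\rfloor})}\bigr| \leq \frac{1}{|B_d(x,2^\ell)|\cdot|B_d(x,2^{\lfloor\epsilon N\rfloor})|}\sum_{z,z'}|\eta(z)-\eta(z')|,
\end{equation*}
the task reduces to controlling $\nu_\alpha[|\eta(z)-\eta(z')|f]$ for pairs $z,z'$ lying in the same level-$j$ cell with $j=N-\lfloor\epsilon N\rfloor$ (both balls being contained in boundedly many such cells). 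Using invariance of $\nu_\alpha$ under $\eta\mapsto\eta^{zz'}$ and Young's inequality $2ab\leq\beta a^2+\beta^{-1}b^2$, one arrives at
\begin{equation*}
\nu_\alpha\bigl[|\eta(z)-\eta(z')|f\bigr] \leq \frac{1}{\beta} + C\beta\Bigl(\tfrac{5}{3}\Bigr)^{\lfloor\epsilon N\rfloor}\mathcal{E}^{\rm EX}_{(G_N,{\bf 1}),\nu_\alpha}(\sqrt f),
\end{equation*}
where Proposition \ref{prop:SGMPL} has been applied in the last step. The choice $\beta = c(5/3)^{N-\lfloor\epsilon N\rfloor}$ renders the Dirichlet term absorbable by the $(5/3)^N\mathcal{E}^{\rm EX}(\sqrt f)$ reservoir and leaves a residual of order $(3/5)^{N-\lfloor\epsilon N\rfloor}$, which vanishes for any fixed $\epsilon<1$.

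The principal obstacle I expect lies in making the one-block estimate quantitative at the $3^N$ super-exponential rate: the mesoscopic cell on $SG$ is strongly recurrent, its spectral gap decays geometrically in $\ell$, and this forces careful coordination between the $3^\ell$ volume, the $5^\ell$ time-scale of relaxation on the cell, and the choice of $\ell$ as a slowly growing function of $N$. Once the one-block and two-blocks errors are both shown to be $o_N(1)$ inside the variational expression, passage back to \eqref{supexp} is standard: sending $\lambda\to\infty$ in the Feynman-Kac bound drives the right-hand side of \eqref{supexp} to $-\infty$. The remaining bookkeeping—Lipschitz continuity of $\Phi_x$, the supremum over $x\in V_N$, and the $T$-dependence—follows by routine adaptations of \cite{KipnisLandim}*{Chapter 5} to the fractal scaling on $SG$.
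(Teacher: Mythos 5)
This paper does not actually prove Theorem \ref{thm:localrep}: it is stated purely as an advertisement for the main result, and the text explicitly defers both the one-block and two-blocks estimates and the full proof to the companion paper \cite{ChenLocalErgodic} (``The details are reported in an upcoming paper''). So there is no in-paper proof to compare against; what can be checked is whether your outline is consistent with the strategy the paper announces, and it is. The reduction via exponential Chebyshev, Feynman--Kac and the variational principle to a supremum over densities with Dirichlet penalty $(5/3)^N\mathcal{E}^{\rm EX}_{(G_N,{\bf 1}),\nu_\alpha}(\sqrt f)$ has the correct scaling ($5^N$ acceleration against the $3^N$ volume rate), and your two-blocks step is exactly where the paper intends Proposition \ref{prop:SGMPL} to enter: pairs $z,z'$ in the averaging ball lie in (boundedly many adjacent) level-$(N-\lfloor\epsilon N\rfloor)$ cells, \eqref{SGMPL} gives the factor $(5/3)^{\lfloor\epsilon N\rfloor}$, and your choice $\beta\asymp(5/3)^{N-\lfloor\epsilon N\rfloor}$ in Young's inequality correctly absorbs the Dirichlet term and leaves a residual $(3/5)^{N-\lfloor\epsilon N\rfloor}\to 0$. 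This is precisely the point the paper makes informally after Proposition \ref{prop:SGMPL} about the cost of transport being of order $(3/5)^{\lfloor\epsilon N\rfloor}$ relative to the energy budget $(3/5)^N$.

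That said, your submission is an outline rather than a proof, and the parts you flag as ``routine'' or ``the principal obstacle'' are exactly the parts that are not routine and that the author relegates to \cite{ChenLocalErgodic}. In particular: (i) the one-block estimate at super-exponential rate $3^N$ requires a quantitative equivalence of ensembles on $SG$ cells and a spectral gap input (via \cite{CLR09} the exclusion gap on a level-$\ell$ cell matches the random walk gap $\asymp 5^{-\ell}$), together with a careful choice of the mesoscopic scale $\ell$ relative to $N$ --- none of which is carried out; (ii) passing between metric balls $B_d(x,2^{\lfloor\epsilon N\rfloor})$ and self-similar cells needs the (easy but necessary) remark that two points in adjacent level-$j$ cells still have $R_{\rm eff}\leq C(5/3)^{N-j}$ by the triangle inequality for the resistance metric; (iii) the uniformity of the Lipschitz constant of $\Phi_x$ and of all estimates over $x\in V_N$ should be justified from the bounded geometry of $SG$. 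So the verdict is: correct strategy, correctly calibrated scaling, but the theorem is not proved here or in your sketch; a complete argument is the content of the follow-up paper.
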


\begin{remark}
From the point of view of non-equilibrium statistical mechanics, one may also consider the boundary-driven version of the exclusion process on $(G,{\bf c})$, following \cite{BDGJL03, BDGJL07}. In this setting, we declare a nonempty subset $\partial V \subset V$ to be the boundary set, and assume for simplicity that $c_{aa'}=0$ for all $a,a' \in \partial V$. Attach to each $a\in \partial V$ a particle reservoir which imposes a fixed particle density at $a$, resulting in a mean density profile which may be spatially non-constant. 

Formally, the generator of the boundary-driven exclusion process is $\mathcal{L} := \mathcal{L}^{\rm EX}_{(G,{\bf c})} + \mathcal{L}^{b}_{\partial V}$, where
\begin{align}
(\mathcal{L}^b_{\partial V} f)(\zeta)  = \sum_{a\in \partial V} [\lambda_-(a) \zeta(a) + \lambda_+(a) (1-\zeta(a))] [ f(\zeta^a)-f(\zeta)], \quad f: \{0,1\}^V \to\mathbb{R}.
\end{align}
Here $\lambda_+(a) \in \mathbb{R}_+$ (resp.\@ $\lambda_-(a) \in \mathbb{R}_+$) represents the rate of particle hopping into (resp.\@ out of) the reservoir at $a$, and
\begin{align}
\zeta^a(z) = \left\{\begin{array}{ll} 1-\zeta(a),& \text{if}~z=a,\\ \zeta(z), & \text{otherwise.}\end{array}\right.
\end{align} 

In \cite{ChenLocalErgodic} we prove the moving particle lemma for the boundary-driven exclusion process, using Theorem \ref{thm:dirichletex} and potential theoretic estimates in the random walk process. 
\end{remark}

We end this section by mentioning that similar questions can be posed for the \emph{zero-range process} (see \cite{KipnisLandim}*{\S2.3} for the definition of the model) on a finite connected weighted graph. To the best of the author's knowledge, there is no analog of the octopus inequality for the zero-range process, and it remains a challenge to obtain sharp asymptotics of the spectral gap on general graphs. Jara has investigated this question on $SG$ \cite{Jara}*{p.~787} and posed an open problem (that the spectral gap has a uniform bound of order $5^{-N}$).

The rest of the paper is organized as follows. In \S\ref{sec:reduction} we briefly recap the idea of electric network reduction, and fix notation which will be used later on. In \S\ref{sec:interchange} we define the interchange process, and show that the octopus inequality implies the monotone decreasing property of the corresponding Dirichlet energy upon network reduction. This leads to a counterpart of Dirichlet's principle for the interchange process. Then in \S\ref{sec:exclusion} we project the interchange process onto a $k$-particle exclusion process, and in conjunction with known properties of the exclusion process, we obtain the desired moving particle lemma, Theorem \ref{thm:dirichletex}.

\section{Electric network reduction} \label{sec:reduction}

In this section we define network reduction, following the notation of \cite{CLR09}*{\S2}. Given a finite weighted graph $(G=(V,E), {\bf c})$ and a vertex $x \in V$, define $V_x = V\setminus \{x\}$, $E_x=\{yz\in E: y,z\neq x\}$, and
\begin{align}
\label{eq:condinc}
\tilde{c}_{yz} := c_{yz} + c^{*,x}_{yz}, \quad c^{*,x}_{yz} := \frac{c_{yx}c_{xz}}{\sum_{w\in V_x} c_{xw}}, \quad yz\in E_x.
\end{align}
We call the weighted graph $\left(G_x=(V_x, E_x), \tilde{\bf c}=(\tilde{c}_{yz})_{yz\in E_x}\right)$ the \emph{reduced (star) graph of $(G,{\bf c})$ at $x$}. In simple terms, $G_x$ is obtained by removing $x$ and its attached edges from $G$. In order to leave the effective conductance between any pair of points invariant, the conductance on each remaining edge $yz\in E_x$ must increase from $c_{yz}$ to $\tilde{c}_{yz}$, or by an amount $c^{*,x}_{yz}$. Formally,  (\ref{eq:condinc}) is obtained by computing the Schur complement of the $(x,x)$ block in the stochastic matrix associated to symmetric random walk on $(G,{\bf c})$. It is direct to verify, via the next proposition, that the effective conductance $c_{\rm eff}(\cdot,\cdot) = [R_{\rm eff}(\cdot,\cdot)]^{-1}$ is invariant under network reduction.

\begin{proposition}[\cite{CLR09}*{Lemma 2.2}]
\label{prop:energystar}
For every $x\in V$ and $f: V\to \mathbb{R}$,
\begin{align}
\label{eq:energyeq}
\sum_{y\in V_x} c_{xy} [f(x)-f(y)]^2 = \sum_{yz\in E_x} c_{yz}^{*,x} [f(y)-f(z)]^2 + \frac{1}{\sum_{y\neq x} c_{xy}}[(Lf)(x)]^2,
\end{align}
where $(Lf)(x) = \sum_{y\in V_x} c_{xy} [f(y)-f(x)]$. It follows that
\begin{align}
\label{eq:energyineq}
\sum_{y\in V_x} c_{xy} [f(x)-f(y)]^2 \geq \sum_{yz\in E_x} c_{yz}^{*,x} [f(y)-f(z)]^2,
\end{align}
with equality holding if and only if $(Lf)(x)=0$.
\end{proposition}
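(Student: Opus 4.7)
The plan is to view the LHS of \eqref{eq:energyeq} as a one-variable quadratic in $f(x)$ and split it into its minimum value plus a correction. Set $S := \sum_{y\in V_x} c_{xy}$ and, for $\alpha\in\mathbb{R}$, define
\begin{equation*}
Q(\alpha) := \sum_{y\in V_x} c_{xy}[f(y)-\alpha]^2,
\end{equation*}
so that $Q(f(x))$ is the LHS. Computing $Q'(\alpha) = 2(S\alpha - \sum_y c_{xy}f(y))$, one finds $Q$ is minimized at $\alpha^* := S^{-1}\sum_y c_{xy}f(y)$, and by completing the square (or by using that $Q$ is a quadratic with leading coefficient $S$) obtains the exact decomposition $Q(\alpha) = Q(\alpha^*) + S(\alpha-\alpha^*)^2$. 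Since $S(f(x)-\alpha^*) = Sf(x) - \sum_y c_{xy}f(y) = -(Lf)(x)$, setting $\alpha = f(x)$ immediately produces the correction term $\frac{1}{S}[(Lf)(x)]^2$ appearing on the RHS of \eqref{eq:energyeq}.

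Next, I would establish the algebraic identity
\begin{equation*}
Q(\alpha^*) = \frac{1}{2S}\sum_{\substack{y,z\in V_x\\ y\neq z}} c_{xy}c_{xz}[f(y)-f(z)]^2.
\end{equation*}
The cleanest route is to expand $Q(\alpha^*) = \sum_y c_{xy}f(y)^2 - S^{-1}(\sum_y c_{xy}f(y))^2$ directly, and separately expand the double sum on the right via $(f(y)-f(z))^2 = f(y)^2 - 2f(y)f(z) + f(z)^2$, using $\sum_{z\neq y}c_{xz} = S - c_{xy}$ and $\sum_{z\neq y}c_{xz}f(z) = \sum_z c_{xz}f(z) - c_{xy}f(y)$. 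The $c_{xy}^2 f(y)^2$ pieces generated by the two splits cancel against each other, leaving precisely the desired equality. Rewriting the unordered double sum as a sum over $yz\in E_x$ identifies $Q(\alpha^*)$ with the first summand on the RHS of \eqref{eq:energyeq}, since $c_{yz}^{*,x}=c_{xy}c_{xz}/S$ by definition.

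Combining the two steps proves \eqref{eq:energyeq}, and \eqref{eq:energyineq} together with its equality case is then immediate from $S^{-1}[(Lf)(x)]^2\geq 0$. The only real obstacle I anticipate is bookkeeping around the edge set $E_x$: the RHS sum $\sum_{yz\in E_x}c_{yz}^{*,x}[f(y)-f(z)]^2$ must be understood as running over all unordered pairs in $V_x$ (equivalently, over the edge set of the reduced graph), not merely over edges inherited from $G$. Once this convention is fixed — and it is harmless, because $c_{yz}^{*,x}$ vanishes whenever $y$ or $z$ fails to be a neighbor of $x$ — the entire argument collapses to the one-parameter variational identity above.
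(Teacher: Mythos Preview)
Your argument is correct and is exactly the kind of computation the paper has in mind: the paper's own proof consists solely of the sentence ``The proof of \eqref{eq:energyeq} is a straightforward algebraic exercise,'' so there is no more detailed approach to compare against. Your completing-the-square/variational organization (minimize the quadratic in $f(x)$, identify the minimum with the weighted variance of the neighbor values, and recognize the remainder as $S^{-1}[(Lf)(x)]^2$) is a clean way to carry it out, and your caveat about $E_x$ is well taken --- the sum on the RHS must indeed range over all unordered pairs in $V_x$ (equivalently, the edges of the reduced network), since otherwise the identity fails already on a three-vertex star; the nonzero terms are in any case supported on pairs of neighbors of $x$.
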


The proof of \eqref{eq:energyeq} is a straightforward algebraic exercise.

We now make the connection between \eqref{eq:energyineq} and the inequality appearing in Dirichlet's principle \eqref{ineq:dirichletel}. Inequality (\ref{eq:energyineq}) says that by fixing a voltage function $f$ and implementing a network reduction, the energy lost due to the the removed edges (LHS of the inequality) is at least the energy gained from the increased conductances on the remaining edges (RHS of the inequality). Of course this is equivalent to saying that the energy is monotone decreasing upon network reduction. Indeed, by adding $\sum_{yz\in E_x} c_{yz}[f(y)-f(z)]^2$ to both sides of (\ref{eq:energyineq}), we get
\begin{align}
\label{energyineq2}
\mathcal{E}_{(G,{\bf c})}^{\rm el}(f) \geq \mathcal{E}_{(G_x, \tilde{\bf c})}^{\rm el}(f).
\end{align}

Let us fix a pair of vertices $x,y \in V$ in the finite weighted graph $(G,{\bf c})$, and label the remaining vertices by $x_1, x_2, \cdots, x_{|V|-2}$. We define a decreasing sequence of weighted graphs $\{(G_i=(V_i, E_i), {\bf c}_i)\}_{i=0}^{|V|-2}$ inductively as follows: Put $(G_0, {\bf c}_0)=(G,{\bf c})$, and for every $0\leq i\leq |V|-3$, let $(G_{i+1}, {\bf c}_{i+1}) = \left((G_i)_{x_{i+1}}, \tilde{\bf c}_i\right)$, where $(G_i)_{x_{i+1}}$ is the subgraph of $G_i$ obtained by removing $x_{i+1}$ and its attached edges, and $(\tilde{c}_i)_{yz} = (c_i)_{yz} + (c_i^{*,x_{i+1}})_{yz}$ for all $yz \in E_{i+1}$, \emph{cf.} (\ref{eq:condinc}). 

Applying the inequality (\ref{energyineq2}) to the sequence of network reductions, we find
\begin{align}
\mathcal{E}_{(G,{\bf c})}^{\rm el}(f) \geq \mathcal{E}_{\left(G_{1}, {\bf c}_{1}\right)}^{\rm el}(f) \geq \cdots \geq \mathcal{E}_{\left(G_{|V|-2}, {\bf c}_{|V|-2}\right)}^{\rm el}(f) = \left({\bf c}_{|V|-2}\right)_{xy} [f(x)-f(y)]^2.
\end{align}
Recognize that $\left({\bf c}_{|V|-2}\right)_{xy}=c_{\rm eff}(x,y) = [R_{\rm eff}(x,y)]^{-1}$. Thus we have proved \eqref{ineq:dirichletel}.

\begin{remark*}
One can show that the condition for equality in \eqref{ineq:dirichletel} follows from the condition for equality in (\ref{energyineq2}). However it is not of central interest to the rest of the paper, so we omit the proof.
\end{remark*}

\section{Moving particle lemma for the interchange process} \label{sec:interchange}

In this section we consider the interchange process on $(G, {\bf c})$. This Markov chain is described informally as follows. Each state corresponds to an assignment of $|V|$ labelled particles to the vertices of $G$ such that each vertex has exactly 1 particle. A transition occurs when particles at vertices $x$ and $y$ interchange their positions at rate $c_{xy}$. Formally, let $S^{\rm IP}$ be the space of permutations on $\{1,2,\cdots, |V|\}$, and for $\eta \in S^{\rm IP}$ and $xy \in E$, let $\eta^{xy}=\eta \tau_{xy}$, where $\tau_{xy} \in S^{\rm IP}$ is the transposition of $x$ and $y$. The generator for the interchange process is
\begin{equation}
(\mathcal{L}^{\rm IP}_{(G,{\bf c})} f)(\eta) = \sum_{xy\in E} c_{xy} [f(\eta^{xy})-f(\eta)], \quad f: S^{\rm IP}\to\mathbb{R}.
 \end{equation}
Henceforth we denote $(\nabla_{xy}f)(\eta) := f(\eta^{xy})-f(\eta)$. Also let $\nu$ be the uniform probability measure on $S^{\rm IP}$, which is the unique reversible invariant measure for this process. The corresponding Dirichlet energy reads
\begin{equation}
\mathcal{E}^{\rm IP}_{(G,{\bf c})}(f) = \nu\left[f (-\mathcal{L}^{\rm IP}_{(G,{\bf c})} f) \right] =\frac{1}{2}\sum_{xy\in E} c_{xy} \,\nu[(\nabla_{xy} f)^2],  \quad f: S^{\rm IP} \to \mathbb{R}.
\end{equation}

The next result, called the \emph{octopus inequality}, is the counterpart of Proposition \ref{prop:energystar} for the interchange process. See \cite{CLR09}*{\S3} for the proof (and also \cite{Cesi} for an algebraic perspective), which involves a series of nontrivial, clever exercise in linear algebra.

\begin{proposition}[Octopus inequality \cite{CLR09}*{Theorem 2.3}]
\label{thm:octopus}
For every $x\in V$ and $f: S^{\rm IP} \to \mathbb{R}$,
\begin{equation}
\label{ineq:octopus}
\sum_{y\in V_x} c_{xy} \,\nu[(\nabla_{yz} f)^2] \geq \sum_{yz \in E_x} c_{yz}^{*,x}\, \nu[(\nabla_{yz} f)^2].
\end{equation}
\end{proposition}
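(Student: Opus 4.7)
The plan is to reformulate \eqref{ineq:octopus} as a positivity statement for an element of the group algebra $\mathbb{R}[S^{\rm IP}]$ and then apply representation theory of the symmetric group to reduce it to finitely many matrix inequalities. Introduce the self-adjoint right-translation operator $(T_{xy}f)(\eta) := f(\eta\tau_{xy})$ on $L^2(\nu)$. Since $\nu[(\nabla_{xy} f)^2] = 2\langle f,(I-T_{xy})f\rangle_{L^2(\nu)}$, the inequality is equivalent to the operator positivity
\begin{equation*}
\Delta_x \;:=\; \sum_{y\in V_x} c_{xy}(I - T_{xy}) \;-\; \sum_{yz\in E_x} c_{yz}^{*,x}(I - T_{yz}) \;\succeq\; 0.
\end{equation*}
The three-transposition identity $\tau_{yz}=\tau_{xy}\tau_{xz}\tau_{xy}$ rewrites each operator in the second sum in terms of the ``tentacles'' $T_{xy}$ appearing in the first, hinting that $\Delta_x$ should ultimately admit a sum-of-squares expression.

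By Peter--Weyl, $L^2(S^{\rm IP},\nu)$ decomposes into isotypic components indexed by partitions $\lambda$ of $|V|$, and the $T_{xy}$ act on each block by the irreducible-representation matrices $\rho_\lambda(\tau_{xy})$. Positivity of $\Delta_x$ is therefore equivalent to the block-by-block matrix positivity
\begin{equation*}
\Delta_x^{(\lambda)} \;:=\; \sum_{y\in V_x} c_{xy}\bigl(I - \rho_\lambda(\tau_{xy})\bigr) \;-\; \sum_{yz\in E_x} c_{yz}^{*,x}\bigl(I - \rho_\lambda(\tau_{yz})\bigr) \;\succeq\; 0
\end{equation*}
for every partition $\lambda$. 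The trivial representation kills both sums identically, while the sign representation collapses to a trivial scalar inequality using $\sum_{yz\in E_x} c_{yz}^{*,x} \leq s_x := \sum_{y\in V_x} c_{xy}$ (immediate from the definition of $c_{yz}^{*,x}$ in \eqref{eq:condinc}). The substantive content of the proposition lies in the non-abelian irreps.

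The main obstacle is to certify $\Delta_x^{(\lambda)} \succeq 0$ for non-trivial $\lambda$, since the transposition matrices $\rho_\lambda(\tau_{xy})$ no longer commute and the weights $c_{xy}$ enter in an entangled fashion. My plan would be to work in Young's orthogonal basis on $V_\lambda$, which simultaneously diagonalizes the Jucys--Murphy elements and gives an explicit combinatorial description of each $\rho_\lambda(\tau_{xy})$, and to interpret $\Delta_x^{(\lambda)}$ as a non-commutative analogue of the Schur complement obtained by eliminating the ``star'' vertex $x$ — in direct parallel with the electrical identity \eqref{eq:energyeq} of Proposition \ref{prop:energystar}. The ideal outcome would be an explicit factorization $\Delta_x^{(\lambda)} = \sum_y c_{xy}\, P_y^\ast Q P_y$ for block matrices $\{P_y\}$ and a manifestly positive kernel $Q$, making positivity automatic. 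This is precisely the ``clever exercise in linear algebra'' carried out in \cite{CLR09}, and short of reproducing their computation in full, I would terminate the proof by invoking their theorem as a black box.
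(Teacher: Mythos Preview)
The paper does not prove this proposition at all: it simply cites \cite{CLR09}*{\S3} (and \cite{Cesi} for an algebraic viewpoint), remarking that the argument ``involves a series of nontrivial, clever exercise in linear algebra.'' Your proposal likewise terminates by invoking \cite{CLR09} as a black box, so at the level of what is actually \emph{proved} the two coincide.

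What you have added is a correct and informative sketch of the architecture of the \cite{CLR09} proof: the passage from \eqref{ineq:octopus} to positivity of the group-algebra element $\Delta_x$, the Peter--Weyl reduction to block positivity of $\Delta_x^{(\lambda)}$ in each irreducible, the disposal of the one-dimensional representations, and the identification of the hard core as a matrix inequality in Young's orthogonal form. This is exactly the route taken in \cite{CLR09} (the algebraic reformulation you describe is essentially the content of \cite{Cesi}), so your outline is faithful rather than an alternative approach. One small caveat: your hoped-for factorization $\Delta_x^{(\lambda)} = \sum_y c_{xy}\, P_y^\ast Q P_y$ with a single manifestly positive kernel $Q$ is more optimistic than what \cite{CLR09} actually achieves; their argument is a somewhat delicate induction combined with an explicit $2\times 2$ block computation rather than a clean global sum-of-squares, which is partly why the equality cases remain opaque (as the paper itself notes in the remark following the proposition).
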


\begin{remark*}
It is unclear to the author under which conditions the inequality (\ref{ineq:octopus}) saturates to an equality on a general weighted graph.
\end{remark*}

Recalling the strategy used in the previous section, we now prove the analog of \eqref{ineq:dirichletel} for the interchange process. This will be referred to as the moving particle lemma for the interchange process. Inequality \eqref{EIPineq2} below will be used in the proof of our main Theorem \ref{thm:dirichletex}. 

\begin{lemma}
\label{thm:dirichletint}
For any $x,y \in V$ and $f: S^{\rm IP} \to\mathbb{R}$,
\begin{align}
\label{eq:mplinter}
\frac{1}{2} \nu[(\nabla_{xy} f)^2]  \leq R_{\rm eff}(x,y)\mathcal{E}^{\rm IP}_{(G,{\bf c})}(f).
\end{align}
\end{lemma}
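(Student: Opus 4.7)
The plan is to mimic the derivation of Dirichlet's principle \eqref{ineq:dirichletel} given at the end of Section \ref{sec:reduction}, substituting Proposition \ref{thm:octopus} (the octopus inequality) for Proposition \ref{prop:energystar} at each step. This substitution works cleanly because the two inequalities have identical algebraic shape: \eqref{ineq:octopus} is precisely \eqref{eq:energyineq} after the replacement $[f(y)-f(z)]^2 \mapsto \nu[(\nabla_{yz} f)^2]$. Throughout, the function $f:S^{\rm IP}\to\mathbb{R}$ and the measure $\nu$ are held fixed at those of the original graph; only the edge weights on which we sum will change.

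Concretely, I would fix $x,y\in V$, label the remaining vertices $x_1,\ldots,x_{|V|-2}$, and form the same decreasing sequence of weighted graphs $\{(G_i,{\bf c}_i)\}_{i=0}^{|V|-2}$ as constructed at the end of Section \ref{sec:reduction}. Define the reduced interchange Dirichlet energy (using the original $f$ and $\nu$)
$$
\mathcal{E}^{\rm IP}_{(G_i,{\bf c}_i)}(f) := \tfrac{1}{2}\sum_{yz\in E_i}(c_i)_{yz}\,\nu[(\nabla_{yz}f)^2].
$$
The key step is to show that each single-vertex reduction weakly decreases this energy, i.e.\ $\mathcal{E}^{\rm IP}_{(G_i,{\bf c}_i)}(f) \geq \mathcal{E}^{\rm IP}_{(G_{i+1},{\bf c}_{i+1})}(f)$. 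To see this, I apply Proposition \ref{thm:octopus} to $(G_i,{\bf c}_i)$ at the vertex $x_{i+1}$, and then add the common quantity $\sum_{yz\in E_{i+1}}(c_i)_{yz}\nu[(\nabla_{yz}f)^2]$ to both sides. The defining recursion $(c_{i+1})_{yz}=(c_i)_{yz}+(c_i^{*,x_{i+1}})_{yz}$ collapses the right-hand side to $2\mathcal{E}^{\rm IP}_{(G_{i+1},{\bf c}_{i+1})}(f)$, while the left-hand side is $2\mathcal{E}^{\rm IP}_{(G_i,{\bf c}_i)}(f)$, yielding the claimed monotonicity.

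Telescoping through the chain of $|V|-2$ reductions then gives
$$
\mathcal{E}^{\rm IP}_{(G,{\bf c})}(f) \;\geq\; \mathcal{E}^{\rm IP}_{(G_{|V|-2},{\bf c}_{|V|-2})}(f) \;=\; \tfrac{1}{2}\,c_{\rm eff}(x,y)\,\nu[(\nabla_{xy}f)^2],
$$
since the final reduced graph consists of the single edge $xy$ carrying the effective conductance $c_{\rm eff}(x,y)=[R_{\rm eff}(x,y)]^{-1}$ (invariance of effective conductance under Schur reduction, noted just before Proposition \ref{prop:energystar}). Rearranging yields \eqref{eq:mplinter}. Since the entire derivation is a mechanical transcription of the electrical-network argument once the octopus inequality is granted, there is no substantive obstacle at this stage; all of the deep content has been packaged inside Proposition \ref{thm:octopus}, whose proof in \cite{CLR09} is the genuinely hard part.
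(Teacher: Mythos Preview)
Your proposal is correct and follows essentially the same approach as the paper: both add the common edge-energy to both sides of the octopus inequality to obtain monotonicity of $\mathcal{E}^{\rm IP}$ under a single network reduction, and then telescope through the chain $\{(G_i,{\bf c}_i)\}_{i=0}^{|V|-2}$ to reach the two-point graph with conductance $c_{\rm eff}(x,y)$. Your explicit remark that $f$ and $\nu$ remain those of the original graph while only the edge weights change is a useful clarification that the paper leaves implicit.
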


\begin{proof}
Upon adding $\sum_{yz\in E_x} c_{yz}\,\nu\left[(\nabla_{yz} f)^2\right]$ to both sides of (\ref{ineq:octopus}) and multiplying by $\frac{1}{2}$, we obtain
\begin{align}
\label{EIPineq}
\mathcal{E}^{\rm IP}_{(G,{\bf c})}(f) \geq \mathcal{E}^{\rm IP}_{(G_x, \tilde{\bf c})}(f).
\end{align}
This shows that in the interchange process the Dirichlet energy is monotone decreasing under network reduction. Adopting the same notation as in the paragraph after (\ref{energyineq2}), we apply (\ref{EIPineq}) to the sequence of network reductions $\{(G_i, {\bf c}_i)\}_{i=0}^{|V|-2}$ to get
\begin{align}
\label{EIPineq2}
\mathcal{E}^{\rm IP}_{(G,{\bf c})}(f) \geq \mathcal{E}^{\rm IP}_{(G_1,{\bf c}_1)}(f) \geq \cdots \geq \mathcal{E}^{\rm IP}_{(G_{|V|-2}, {\bf c}_{|V|-2})}(f) =  \frac{1}{2} c_{\rm eff}(x,y) \,\nu[(\nabla_{xy} f)^2].
\end{align}
This proves the lemma.
\end{proof}

\section{Moving particle lemma for the exclusion process} \label{sec:exclusion}

Finally we turn to the main problem considered in this paper. Let $k\in \{0,1,\cdots |V|\}$. The symmetric exclusion process (SEP) of $k$ particles on a finite weighted graph $(G,{\bf c})$ is a Markov chain on the state space $S^{\rm EX}_k = \{\zeta \subset V: |\zeta|=k\}$ generated by
\begin{align}
\label{LEXk}
(\mathcal{L}^{\rm EX}_{(G,{\bf c}),k} f)(\zeta) = \sum_{xy\in E} c_{xy} [f(\zeta^{xy})-f(\zeta)], \quad f: S^{\rm EX}_k\to\mathbb{R},
\end{align}
where
\begin{align}
\zeta^{xy} = \left\{\begin{array}{ll} (\zeta \setminus \{x\}) \cup \{y\},& \text{if}~x\in \zeta ~\text{and}~y \notin \zeta, \\ (\zeta\setminus \{y\}) \cup\{x\} ,&\text{if}~y\in \zeta ~\text{and}~x\notin\zeta, \\ \zeta,& \text{otherwise}.\end{array}\right.
\end{align}

This process can be viewed as a sub-process of the interchange process as follows \cite{CLR09}*{\S4.1.1}. Let $\xi_i(\eta)$ denote the position of particle $i$ in the configuration $\eta\in S^{\rm IP}$. Define the contraction $\pi_k: S^{\rm IP} \to S^{\rm EX}_k$ by
\begin{equation}
\pi_k(\eta) = \{\xi_1(\eta), \cdots, \xi_k(\eta)\}.
\end{equation}
It is direct to verify that $\pi_k(\eta^{xy}) = \left(\pi_k(\eta)\right)^{xy}$. 
Therefore for all $f: S^{\rm EX}_k\to\mathbb{R}$ and $\eta\in S^{\rm IP}$,
\begin{align}
\left(\mathcal{L}^{\rm IP}_{(G,{\bf c})} (f\circ \pi_k)\right)(\eta) &= \sum_{xy\in E} c_{xy} [f(\pi_k(\eta^{xy}) )-f (\pi_k(\eta))]\\
\nonumber &= \sum_{xy\in E} c_{xy} [f((\pi_k(\eta))^{xy})-f(\pi_k(\eta))]= \left((\mathcal{L}^{\rm EX}_{(G,{\bf c}),k} f)\circ \pi_k \right)(\eta).
\end{align}
So if $\nu$ is the uniform probability measure on $S^{\rm IP}$, then
\begin{align}
\label{chofvar}
\mathcal{E}^{\rm IP}_{(G,{\bf c})}(f\circ \pi_k) &=\nu\left[(f\circ \pi_k) (-\mathcal{L}^{\rm IP}_{(G,{\bf c})}(f\circ \pi_k))\right]
=  \nu\left[(f\circ \pi_k)((-\mathcal{L}^{\rm EX}_{(G,{\bf c})} f)\circ \pi_k)\right]\\
\nonumber &=  \int_{S^{\rm IP}} \, (f\circ \pi_k)(\eta) ((-\mathcal{L}^{\rm EX}_{(G,{\bf c})} f)\circ \pi_k)(\eta) \,\nu(d\eta)\\
\nonumber &=  \int_{S^{\rm EX}_k} \, f(\zeta) (-\mathcal{L}^{\rm EX}_{(G,{\bf c})} f)(\zeta)\, (\nu\circ \pi_k^{-1})(d\zeta),
\end{align}
assuming that the integral is finite. A moment's thought tells us that $\nu_k:=\nu\circ \pi_k^{-1}$ is the uniform probability measure on $S^{\rm EX}_k$ (charging $[{n\choose k}]^{-1}$ to each $\zeta \in S^{\rm EX}_k$). Let us denote the Dirichlet energy of the $k$-particle exclusion process w.r.t.\@ $\nu_k$ by
\begin{align}
\label{eEX}
\mathcal{E}^{\rm EX}_{(G,{\bf c}),k}(f) :=  \nu_k\left[f(-\mathcal{L}^{\rm EX}_{(G, {\bf c})}f) \right] =\frac{1}{2}\sum_{xy\in E} c_{xy} \nu_k[(\nabla_{xy}f)^2].
\end{align}
Combining \eqref{chofvar} and \eqref{eEX} yields the identity
\begin{align}
\label{eq:Eproj}
\mathcal{E}^{\rm IP}_{(G,{\bf c})}(f\circ \pi_k) = \mathcal{E}^{\rm EX}_{(G,{\bf c}),k}(f), \quad f: S^{\rm EX}_k \to\mathbb{R}.
\end{align}

If the total number of particles $k$ is unspecified, then the SEP on $(G, {\bf c})$ can be viewed as a Markov chain on the configuration space $S^{\rm EX}=\{0,1\}^V$ with generator
\begin{equation}
(\mathcal{L}^{\rm EX}_{(G,{\bf c})} f)(\zeta) = \sum_{xy\in E} c_{xy}( \nabla_{xy}f)(\zeta), \quad f: S^{\rm EX}\to \mathbb{R},
\end{equation}
where $\nabla_{xy}$ is as defined just after \eqref{LSEP}. Recall that the total particle number is conserved in the SEP. Therefore the generator $\mathcal{L}^{\rm EX}_{(G,{\bf c})}$ admits the orthogonal decomposition $\mathcal{L}^{\rm EX}_{(G,{\bf c})} = \bigoplus_{k=0}^{|V|} \mathcal{L}^{\rm EX}_{(G,{\bf c}),k}$, where each $\mathcal{L}^{\rm EX}_{(G,{\bf c}),k}$, as defined in \eqref{LEXk}, acts on the invariant subspace $S^{\rm EX}_k =\{\zeta \in \{0,1\}^V: \sum_{x\in V} \zeta(x)=k \}$. 

For $\alpha \in [0,1]$, let $\nu_\alpha$ be the product Bernoulli measure on $S^{\rm EX}$ with marginal $\nu_\alpha(\{\zeta:\zeta(x)=1\})=\alpha$ for all $x\in V$. Define the Dirichlet energy w.r.t.\@ $\nu_\alpha$ by
\begin{equation}
\mathcal{E}^{\rm EX}_{(G,{\bf c}),\nu_\alpha}(f) = \nu_\alpha\left[f (-\mathcal{L}^{\rm EX}_{(G,{\bf c})} f) \right]= \frac{1}{2}\sum_{xy\in E} c_{xy} \nu_\alpha[(\nabla_{xy} f)^2], \quad f: S^{\rm EX}\to \mathbb{R}.
\end{equation}

\begin{lemma}
\label{lem:chambers}
For every $f: S^{\rm EX}\to\mathbb{R}$,
\begin{equation}
\label{eq:Eex}
\mathcal{E}^{\rm EX}_{(G,{\bf c}),\nu_\alpha}(f) = \sum_{k=0}^{|V|} {|V| \choose k}\alpha^k (1-\alpha)^{|V|-k}\mathcal{E}^{\rm EX}_{(G,{\bf c}),k}(f_k) =\sum_{k=0}^{|V|} {|V|\choose k}\alpha^k (1-\alpha)^{|V|-k}\mathcal{E}^{\rm IP}_{(G,{\bf c})}(f_k\circ \pi_k)  ,
\end{equation}
where $f_k$ denotes the orthogonal projection of $f$ onto $S^{\rm EX}_k$.
\end{lemma}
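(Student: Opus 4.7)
The plan is to decompose the Dirichlet energy into particle-number sectors, then invoke the identity \eqref{eq:Eproj} on each sector.

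First I would observe that the state space $S^{\rm EX}=\{0,1\}^V$ splits as the disjoint union $\bigsqcup_{k=0}^{|V|} S^{\rm EX}_k$, and, because the generator $\mathcal{L}^{\rm EX}_{(G,{\bf c})}$ conserves the total number of particles, $\mathcal{L}^{\rm EX}_{(G,{\bf c})} f_k$ is supported on $S^{\rm EX}_k$ whenever $f_k$ is. Writing $f=\sum_{k=0}^{|V|} f_k$, the cross-terms $f_j(-\mathcal{L}^{\rm EX}_{(G,{\bf c})} f_k)$ with $j\neq k$ vanish pointwise, so
\begin{equation*}
f\,(-\mathcal{L}^{\rm EX}_{(G,{\bf c})} f) = \sum_{k=0}^{|V|} f_k\,(-\mathcal{L}^{\rm EX}_{(G,{\bf c})} f_k) = \sum_{k=0}^{|V|} f_k\,(-\mathcal{L}^{\rm EX}_{(G,{\bf c}),k} f_k).
\end{equation*}

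Next I would compute the restriction of $\nu_\alpha$ to each sector. Since $\nu_\alpha(\zeta) = \alpha^{|\zeta|}(1-\alpha)^{|V|-|\zeta|}$ depends only on the particle number, $\nu_\alpha$ is constant on $S^{\rm EX}_k$ with total mass $\binom{|V|}{k}\alpha^k(1-\alpha)^{|V|-k}$; thus
\begin{equation*}
\nu_\alpha\big|_{S^{\rm EX}_k} = \binom{|V|}{k}\alpha^k(1-\alpha)^{|V|-k}\,\nu_k,
\end{equation*}
where $\nu_k$ is the uniform probability measure on $S^{\rm EX}_k$ introduced just before \eqref{eEX}. Integrating the sector-wise decomposition against $\nu_\alpha$ then gives
\begin{equation*}
\mathcal{E}^{\rm EX}_{(G,{\bf c}),\nu_\alpha}(f) = \sum_{k=0}^{|V|} \binom{|V|}{k}\alpha^k(1-\alpha)^{|V|-k} \,\nu_k\!\left[f_k(-\mathcal{L}^{\rm EX}_{(G,{\bf c}),k} f_k)\right] = \sum_{k=0}^{|V|} \binom{|V|}{k}\alpha^k(1-\alpha)^{|V|-k}\,\mathcal{E}^{\rm EX}_{(G,{\bf c}),k}(f_k),
\end{equation*}
which is the first equality. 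The second equality is immediate from the projection identity \eqref{eq:Eproj} applied term by term with $f_k$ in place of $f$.

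I do not anticipate a serious obstacle: the argument is essentially bookkeeping (orthogonality of the sector decomposition, plus the explicit form of $\nu_\alpha$ on each sector). The only point that requires a moment of care is verifying that $\mathcal{L}^{\rm EX}_{(G,{\bf c})}$ respects the sector decomposition, which follows directly from the definition of $\zeta^{xy}$ preserving $\sum_x \zeta(x)$.
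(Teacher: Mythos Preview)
Your proof is correct and follows essentially the same approach as the paper's: decompose the energy by particle-number sectors using the orthogonality of $\mathcal{L}^{\rm EX}_{(G,{\bf c})}$, identify the restriction of $\nu_\alpha$ to $S^{\rm EX}_k$ as $\binom{|V|}{k}\alpha^k(1-\alpha)^{|V|-k}\nu_k$, and then apply \eqref{eq:Eproj} term by term. Your write-up is in fact a bit more explicit than the paper's about why the cross-terms vanish.
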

\begin{proof}
On the one hand, using the orthogonal decomposition of $\mathcal{L}^{\rm EX}_{(G,{\bf c})}$, we find
\begin{align}
\mathcal{E}^{\rm EX}_{(G,{\bf c}), \nu_\alpha}(f) = \nu_\alpha\left[ f(-\mathcal{L}^{\rm EX}_{(G,{\bf c})} f)\right] =\sum_{k=0}^{|V|}\nu_\alpha\left[ f_k (-\mathcal{L}^{\rm EX}_{(G,{\bf c}),k} f_k) \right].
\end{align}
On the other hand, the restriction of $\nu_\alpha$ to $S^{\rm EX}_k$ is a uniform measure with total mass ${|V|\choose k}\alpha^k (1-\alpha)^{|V|-k}$. Referring to \eqref{eEX} we see that 
\begin{align}
\nu_\alpha\left[ f_k (-\mathcal{L}^{\rm EX}_{(G,{\bf c}),k} f_k) \right] = {|V|\choose k}\alpha^k (1-\alpha)^{|V|-k}\mathcal{E}^{\rm EX}_{(G,{\bf c}),k}(f_k).
\end{align}
These two observations together justify the first equality in (\ref{eq:Eex}). The second equality in \eqref{eq:Eex} follows from (\ref{eq:Eproj}).
\end{proof}

We are now in a position to prove our main result.

\begin{proof}[Proof of Theorem \ref{thm:dirichletex}]
We carry out the sequence of network reductions $\{(G_i, {\bf c}_i)\}_{i=0}^{|V|-2}$ as described in \S\ref{sec:reduction} and \S\ref{sec:interchange}. Applying \eqref{eq:Eex}, \eqref{EIPineq2}, \eqref{eq:Eex}, and \eqref{eEX} in order, we obtain
\begin{align*}
\mathcal{E}^{\rm EX}_{(G,{\bf c}),\nu_\alpha}(f) & =\sum_{k=0}^{|V|} {|V|\choose k} \alpha^k (1-\alpha)^{|V|-k}\mathcal{E}^{\rm IP}_{(G,{\bf c})}(f_k\circ \pi_k) \\
\nonumber &\geq  \sum_{k=0}^{|V|} {|V|\choose k} \alpha^k (1-\alpha)^{|V|-k}\mathcal{E}^{\rm IP}_{(G_{|V|-2},{\bf c}_{|V|-2})}(f_k\circ \pi_k) \\ 
\nonumber &= \mathcal{E}^{\rm EX}_{(G_{|V|-2},{\bf c}_{|V|-2}),\nu_\alpha}(f) \\
&=  \frac{1}{2} c_{\rm eff}(x,y) \, \nu_{\alpha}[(\nabla_{xy} f)^2].
\end{align*}
Since $c_{\rm eff}(x,y) = [R_{\rm eff}(x,y)]^{-1}$, the theorem is proved.
\end{proof}

\subsection*{Acknowledgements}

I would like to thank Luke Rogers, Laurent Saloff-Coste, and especially Alexander Teplyaev for useful discussions concerning the contents of this paper. 
I am also grateful to Ryokichi Tanaka and the referees for valuable comments on an earlier version of this paper.

\begin{bibdiv}
\begin{biblist}

\bib{ABDS13}{article}{
  author={Akkermans, Eric},
  author={Bodineau, Thierry},
  author={Derrida, Bernard},
  author={Shpielberg, Ohad},
  title={Universal current fluctuations in the symmetric exclusion process and other diffusive systems},
  journal={EPL (Europhysics Letters)},
  volume={103},
  number={2},
  pages={20001},
  url={http://stacks.iop.org/0295-5075/103/i=2/a=20001},
  year={2013},
}

\bib{AldousSGAP}{webpage}{
  author={Aldous, David},
  title={Spectral Gap for the Interchange (Exclusion) Process on a Finite Graph},
  year={2009},
  myurl={https://www.stat.berkeley.edu/~aldous/Research/OP/sgap.html},
  note={},
}

\bib{AldousFill}{book}{
    AUTHOR = {Aldous, David},
    author={Fill, James Allen},
     TITLE = {Reversible Markov Chains and Random Walks on Graphs},
      YEAR = {2002},
      NOTE = {Unfinished monograph, recompiled 2014, available 
      at \url{http://www.stat.berkeley.edu/~aldous/RWG/book.html}},
      }

\bib{BarlowStFlour}{article}{
   author={Barlow, Martin T.},
   title={Diffusions on fractals},
   conference={
      title={Lectures on probability theory and statistics},
      address={Saint-Flour},
      date={1995},
   },
   book={
      series={Lecture Notes in Math.},
      volume={1690},
      publisher={Springer, Berlin},
   },
   date={1998},
   pages={1--121},
   review={\MR{1668115}},
   doi={10.1007/BFb0092537},
}

\bib{BB99}{article}{
   author={Barlow, Martin T.},
   author={Bass, Richard F.},
   title={Brownian motion and harmonic analysis on Sierpinski carpets},
   journal={Canad. J. Math.},
   volume={51},
   date={1999},
   number={4},
   pages={673--744},
   issn={0008-414X},
   review={\MR{1701339}},
   doi={10.4153/CJM-1999-031-4},
}

\bib{BBKT10}{article}{
   author={Barlow, Martin T.},
   author={Bass, Richard F.},
   author={Kumagai, Takashi},
   author={Teplyaev, Alexander},
   title={Uniqueness of Brownian motion on Sierpi\'nski carpets},
   journal={J. Eur. Math. Soc. (JEMS)},
   volume={12},
   date={2010},
   number={3},
   pages={655--701},
   issn={1435-9855},
   review={\MR{2639315}},
}

\bib{BCK05}{article}{
   author={Barlow, Martin T.},
   author={Coulhon, Thierry},
   author={Kumagai, Takashi},
   title={Characterization of sub-Gaussian heat kernel estimates on strongly
   recurrent graphs},
   journal={Comm. Pure Appl. Math.},
   volume={58},
   date={2005},
   number={12},
   pages={1642--1677},
   issn={0010-3640},
   review={\MR{2177164}},
   doi={10.1002/cpa.20091},
}

\bib{BST99}{article}{
   author={Ben-Bassat, Oren},
   author={Strichartz, Robert S.},
   author={Teplyaev, Alexander},
   title={What is not in the domain of the Laplacian on Sierpinski gasket
   type fractals},
   journal={J. Funct. Anal.},
   volume={166},
   date={1999},
   number={2},
   pages={197--217},
   issn={0022-1236},
   review={\MR{1707752}},
   doi={10.1006/jfan.1999.3431},
}

\bib{BDGJL03}{article}{
   author={Bertini, L.},
   author={De Sole, A.},
   author={Gabrielli, D.},
   author={Jona-Lasinio, G.},
   author={Landim, C.},
   title={Large deviations for the boundary driven symmetric simple
   exclusion process},
   journal={Math. Phys. Anal. Geom.},
   volume={6},
   date={2003},
   number={3},
   pages={231--267},
   issn={1385-0172},
   review={\MR{1997915 (2004j:82034)}},
   doi={10.1023/A:1024967818899},
}

\bib{BDGJL07}{article}{
   author={Bertini, L.},
   author={De Sole, A.},
   author={Gabrielli, D.},
   author={Jona-Lasinio, G.},
   author={Landim, C.},
   title={Large deviations of the empirical current in interacting particle
   systems},
   language={English, with Russian summary},
   journal={Teor. Veroyatn. Primen.},
   volume={51},
   date={2006},
   number={1},
   pages={144--170},
   issn={0040-361X},
   translation={
      journal={Theory Probab. Appl.},
      volume={51},
      date={2007},
      number={1},
      pages={2--27},
      issn={0040-585X},
   },
   review={\MR{2324172 (2008i:60166)}},
   doi={10.1137/S0040585X97982256},
}

\bib{BDGJL15}{article}{
  title = {Macroscopic fluctuation theory},
   author={Bertini, L.},
   author={De Sole, A.},
   author={Gabrielli, D.},
   author={Jona-Lasinio, G.},
   author={Landim, C.},
  journal = {Rev. Mod. Phys.},
  volume = {87},
  issue = {2},
  pages = {593--636},
  numpages = {44},
  year = {2015},
  month = {Jun},
  publisher = {American Physical Society},
  doi = {10.1103/RevModPhys.87.593},
  url = {http://link.aps.org/doi/10.1103/RevModPhys.87.593}
}

\bib{CLR09}{article}{
   author={Caputo, Pietro},
   author={Liggett, Thomas M.},
   author={Richthammer, Thomas},
   title={Proof of Aldous' spectral gap conjecture},
   journal={J. Amer. Math. Soc.},
   volume={23},
   date={2010},
   number={3},
   pages={831--851},
   issn={0894-0347},
   review={\MR{2629990 (2011k:60316)}},
   doi={10.1090/S0894-0347-10-00659-4},
}

\bib{Cesi}{article}{
   author={Cesi, Filippo},
   title={A few remarks on the octopus inequality and Aldous' spectral gap
   conjecture},
   journal={Comm. Algebra},
   volume={44},
   date={2016},
   number={1},
   pages={279--302},
   issn={0092-7872},
   review={\MR{3413687}},
   doi={10.1080/00927872.2014.975349},
}

\bib{ChenLocalErgodic}{article}{
   author={Chen, Joe P.},
   title={Local ergodicity in the exclusion process on an infinite weighted graph},
   journal={arXiv preprint},
    date={2017},
    eprint = {http://arxiv.org/abs/1705.10290},
}

\bib{ChenTeplyaevSGHydro}{article}{
   author={Chen, Joe P.},
   author={Hinz, Michael},
   author={Teplyaev, Alexander},
   title={Hydrodynamic limit of the the boundary-driven exclusion process on the Sierpinski gasket},
   date={2017},
   journal={preprint},
}

\bib{DSC93}{article}{
   author={Diaconis, Persi},
   author={Saloff-Coste, Laurent},
   title={Comparison theorems for reversible Markov chains},
   journal={Ann. Appl. Probab.},
   volume={3},
   date={1993},
   number={3},
   pages={696--730},
   issn={1050-5164},
   review={\MR{1233621}},
}

\bib{DoyleSnell}{book}{
   author={Doyle, Peter G.},
   author={Snell, J. Laurie},
   title={Random walks and electric networks},
   series={Carus Mathematical Monographs},
   volume={22},
   publisher={Mathematical Association of America, Washington, DC},
   date={1984},
   pages={xiv+159},
   isbn={0-88385-024-9},
   review={\MR{920811 (89a:94023)}},
}

\bib{IPSStFlour}{collection}{
   author={Durrett, Rick},
   author={Liggett, Thomas M.},
   author={Spitzer, Frank},
   author={Sznitman, Alain-Sol},
   title={Interacting particle systems at Saint-Flour},
   series={Probability at Saint-Flour},
   publisher={Springer, Heidelberg},
   date={2012},
   pages={viii+331},
   isbn={978-3-642-25297-6},
   review={\MR{3075635}},
}

\bib{Faggionato}{article}{
   author={Faggionato, Alessandra},
   title={Hydrodynamic limit of zero range processes among random
   conductances on the supercritical percolation cluster},
   journal={Electron. J. Probab.},
   volume={15},
   date={2010},
   pages={no. 10, 259--291},
   issn={1083-6489},
   review={\MR{2609588}},
   doi={10.1214/EJP.v15-748},
}

\bib{GPV88}{article}{
   author={Guo, M. Z.},
   author={Papanicolaou, G. C.},
   author={Varadhan, S. R. S.},
   title={Nonlinear diffusion limit for a system with nearest neighbor
   interactions},
   journal={Comm. Math. Phys.},
   volume={118},
   date={1988},
   number={1},
   pages={31--59},
   issn={0010-3616},
   review={\MR{954674 (89m:60255)}},
}

\bib{Hino}{article}{
   author={Hino, Masanori},
   title={On singularity of energy measures on self-similar sets},
   journal={Probab. Theory Related Fields},
   volume={132},
   date={2005},
   number={2},
   pages={265--290},
   issn={0178-8051},
   review={\MR{2199293}},
   doi={10.1007/s00440-004-0396-1},
}

\bib{Jara}{article}{
   author={Jara, Milton},
   title={Hydrodynamic limit for a zero-range process in the Sierpinski
   gasket},
   journal={Comm. Math. Phys.},
   volume={288},
   date={2009},
   number={2},
   pages={773--797},
   issn={0010-3616},
   review={\MR{2501000 (2010f:60279)}},
   doi={10.1007/s00220-009-0746-z},
}

\bib{KigamiBook}{book}{
   author={Kigami, Jun},
   title={Analysis on fractals},
   series={Cambridge Tracts in Mathematics},
   volume={143},
   publisher={Cambridge University Press, Cambridge},
   date={2001},
   pages={viii+226},
   isbn={0-521-79321-1},
   review={\MR{1840042}},
   doi={10.1017/CBO9780511470943},
}

\bib{KigamiRF}{article}{
   author={Kigami, Jun},
   title={Harmonic analysis for resistance forms},
   journal={J. Funct. Anal.},
   volume={204},
   date={2003},
   number={2},
   pages={399--444},
   issn={0022-1236},
   review={\MR{2017320}},
   doi={10.1016/S0022-1236(02)00149-0},
}

\bib{KipnisLandim}{book}{
   author={Kipnis, Claude},
   author={Landim, Claudio},
   title={Scaling limits of interacting particle systems},
   series={Grundlehren der Mathematischen Wissenschaften [Fundamental
   Principles of Mathematical Sciences]},
   volume={320},
   publisher={Springer-Verlag, Berlin},
   date={1999},
   pages={xvi+442},
   isbn={3-540-64913-1},
   review={\MR{1707314 (2000i:60001)}},
   doi={10.1007/978-3-662-03752-2},
}

\bib{KOV89}{article}{
   author={Kipnis, C.},
   author={Olla, S.},
   author={Varadhan, S. R. S.},
   title={Hydrodynamics and large deviation for simple exclusion processes},
   journal={Comm. Pure Appl. Math.},
   volume={42},
   date={1989},
   number={2},
   pages={115--137},
   issn={0010-3640},
   review={\MR{978701 (91h:60115)}},
   doi={10.1002/cpa.3160420202},
}

\bib{KumagaiStFlour}{book}{
   author={Kumagai, Takashi},
   title={Random walks on disordered media and their scaling limits},
   series={Lecture Notes in Mathematics},
   volume={2101},
   note={Lecture notes from the 40th Probability Summer School held in
   Saint-Flour, 2010;
   École d'Été de Probabilités de Saint-Flour. [Saint-Flour Probability
   Summer School]},
   publisher={Springer, Cham},
   date={2014},
   pages={x+147},
   isbn={978-3-319-03151-4},
   isbn={978-3-319-03152-1},
   review={\MR{3156983}},
}

\bib{Kusuoka}{article}{
   author={Kusuoka, Shigeo},
   title={Dirichlet forms on fractals and products of random matrices},
   journal={Publ. Res. Inst. Math. Sci.},
   volume={25},
   date={1989},
   number={4},
   pages={659--680},
   issn={0034-5318},
   review={\MR{1025071}},
   doi={10.2977/prims/1195173187},
}

\bib{KusuokaZhou}{article}{
   author={Kusuoka, Shigeo},
   author={Zhou, Xian Yin},
   title={Dirichlet forms on fractals: Poincar\'e constant and resistance},
   journal={Probab. Theory Related Fields},
   volume={93},
   date={1992},
   number={2},
   pages={169--196},
   issn={0178-8051},
   review={\MR{1176724}},
   doi={10.1007/BF01195228},
}

\bib{MCMT}{book}{
   author={Levin, David A.},
   author={Peres, Yuval},
   author={Wilmer, Elizabeth L.},
   title={Markov chains and mixing times},
   note={With a chapter by James G. Propp and David B. Wilson},
   publisher={American Mathematical Society, Providence, RI},
   date={2009},
   pages={xviii+371},
   isbn={978-0-8218-4739-8},
   review={\MR{2466937 (2010c:60209)}},
}

\bib{LiggettBook}{book}{
   author={Liggett, Thomas M.},
   title={Interacting particle systems},
   series={Classics in Mathematics},
   note={Reprint of the 1985 original},
   publisher={Springer-Verlag, Berlin},
   date={2005},
   pages={xvi+496},
   isbn={3-540-22617-6},
   review={\MR{2108619}},
}

\bib{LyonsPeres}{book}{
author = {Lyons, Russell},
author = {Peres, Yuval},
title = {Probability on Trees and Networks},
publisher = {Cambridge University Press},
date = {2017},
note = {Available at \url{http://pages.iu.edu/~rdlyons/prbtree/prbtree.html}},
}

\bib{Morris}{article}{
   author={Morris, Ben},
   title={Spectral gap for the zero range process with constant rate},
   journal={Ann. Probab.},
   volume={34},
   date={2006},
   number={5},
   pages={1645--1664},
   issn={0091-1798},
   review={\MR{2271475}},
   doi={10.1214/009117906000000304},
}

\bib{Quastel}{article}{
   author={Quastel, Jeremy},
   title={Bulk diffusion in a system with site disorder},
   journal={Ann. Probab.},
   volume={34},
   date={2006},
   number={5},
   pages={1990--2036},
   issn={0091-1798},
   review={\MR{2271489}},
   doi={10.1214/009117906000000322},
}

\bib{Spitzer}{article}{
   author={Spitzer, Frank},
   title={Interaction of Markov processes},
   journal={Advances in Math.},
   volume={5},
   date={1970},
   pages={246--290},
   issn={0001-8708},
   review={\MR{0268959}},
}

\bib{Spohn}{book}{
   author={Spohn, Herbert},
   title={Large Scale Dynamics of Interacting Particles},
   publisher={Springer},
   date={1991},
}

\bib{StrichartzBook}{book}{
   author={Strichartz, Robert S.},
   title={Differential equations on fractals: a tutorial},
   publisher={Princeton University Press, Princeton, NJ},
   date={2006},
   pages={xvi+169},
   isbn={978-0-691-12731-6},
   isbn={0-691-12731-X},
   review={\MR{2246975}},
}

\bib{TanakaCrystal}{article}{
   author={Tanaka, Ryokichi},
   title={Hydrodynamic limit for weakly asymmetric simple exclusion
   processes in crystal lattices},
   journal={Comm. Math. Phys.},
   volume={315},
   date={2012},
   number={3},
   pages={603--641},
   issn={0010-3616},
   review={\MR{2981809}},
   doi={10.1007/s00220-012-1574-0},
}

\bib{TanakaTower}{article}{
   author={Tanaka, Ryokichi},
   title={A note on a local ergodic theorem for an infinite tower of
   coverings},
   conference={
      title={Mathematical challenges in a new phase of materials science},
   },
   book={
      series={Springer Proc. Math. Stat.},
      volume={166},
      publisher={Springer, [Tokyo]},
   },
   date={2016},
   pages={101--116},
   review={\MR{3557606}},
}

\bib{ThomsonTait}{book}{
	author={Thomson, William},
	author={Tait, Peter Guthrie},
	title={Treatise on Natural Philosophy},
	publisher={Oxford University Press},
	date={1867},
}

\end{biblist}
\end{bibdiv}

\vspace{10pt}

\end{document}